\newtheorem{theo+}              {Theorem}           [section]
\newtheorem{prop+}  [theo+]     {Proposition}
\newtheorem{coro+}  [theo+]     {Corollary}
\newtheorem{lemm+}  [theo+]     {Lemma}
\newtheorem{exam+}  [theo+]     {Example}
\newtheorem{rema+}  [theo+]     {Remark}
\newtheorem{defi+}  [theo+]     {Definition}
\newenvironment{theorem}{\begin{theo+}}{\end{theo+}}
\newenvironment{proposition}{\begin{prop+}}{\end{prop+}}
\newenvironment{corollary}{\begin{coro+}}{\end{coro+}}
\theoremstyle{plain} \theoremstyle{remark}
\newtheorem{remark}{Remark}
\newtheorem{example}{Example}
\def \r{\mbox{${\mathbb R}$}}
\def\E{/\kern-1.0em \equiv }
\title{ Biharmonic hypersurfaces in Riemannian manifolds}
\author{Ye-Lin Ou$^{*}$ }
\address{Department of
Mathematics,\newline\indent Texas A $\&$ M University-Commerce,
\newline\indent Commerce TX 75429,\newline\indent USA.\newline\indent
E-mail:yelin$\_$ou@tamu-commerce.edu}
\thanks{$^*$ Supported by Texas A $\&$ M University-Commerce
``Faculty Development Program" (2008)}
\begin{document}
\title[Biharmonic hypersurfaces in Riemannian manifolds]{Biharmonic hypersurfaces in  Riemannian manifolds }
\date {10/28/2009} \subjclass{58E20, 53C12, 53C42} \keywords{Biharmonic
maps, biharmonic hypersurfaces, biharmonic foliations, conformally
flat space, Einstein space.}
 \maketitle

\section*{Abstract}

\begin{quote}
{\footnotesize We study biharmonic hypersurfaces in a generic
Riemannian manifold. We first derive an invariant equation for such
hypersurfaces generalizing the biharmonic hypersurface equation in
space forms studied in \cite{Ji2}, \cite{CH}, \cite{CMO1},
\cite{CMO2}. We then apply the equation to show that the generalized
 Chen's conjecture is true for totally umbilical biharmonic
hypersurfaces in an Einstein space, and construct a (2-parameter)
family of conformally flat metrics and a (4-parameter) family of
multiply warped product metrics each of which turns the foliation of
an upper-half space of $\mathbb{R}^m$ by parallel hyperplanes into a
foliation with each leave a proper biharmonic hypersurface. We also
study the biharmonicity of Hopf cylinders of a Riemannian
submersion.}
\end{quote}
\section{Biharmonic maps and submanifolds}

All manifolds, maps, and tensor fields that appear in this paper are supposed to be smooth unless there is an otherwise statement.\\

A biharmonic map is a map $\varphi:(M, g)\longrightarrow (N, h)$
between Riemannian manifolds that is a critical point of the
bienergy functional
\begin{equation}\nonumber
E^{2}\left(\varphi,\Omega \right)= \frac{1}{2} {\int}_{\Omega}
\left|\tau(\varphi) \right|^{2}{\rm d}x
\end{equation}
for every compact subset $\Omega$ of $M$, where $\tau(\varphi)={\rm
Trace}_{g}\nabla {\rm d} \varphi$ is the tension field of $\varphi$.
The Euler-Lagrange equation of this functional gives the biharmonic
map equation (\cite{Ji})
\begin{equation}\label{BTF}
\tau^{2}(\varphi):={\rm
Trace}_{g}(\nabla^{\varphi}\nabla^{\varphi}-\nabla^{\varphi}_{\nabla^{M}})\tau(\varphi)
- {\rm Trace}_{g} R^{N}({\rm d}\varphi, \tau(\varphi)){\rm d}\varphi
=0,
\end{equation}
which states the fact that the map $\varphi$ is biharmonic if and
only if its bitension field $\tau^{2}(\varphi)$ vanishes
identically. In the above equation we have used $R^{N}$ to denote
the curvature operator of $(N, h)$ defined by
$$R^{N}(X,Y)Z=
[\nabla^{N}_{X},\nabla^{N}_{Y}]Z-\nabla^{N}_{[X,Y]}Z.$$ Clearly, it
follows from (\ref{BTF}) that any harmonic map is biharmonic and we
call those non-harmonic biharmonic maps {\bf proper biharmonic maps}.\\

\indent For a submanifold $M^m$ of Euclidean space $\mathbb{R}^{n}$
with the mean curvature vector ${\bf H}$ viewed as a map ${\bf H} :
M\longrightarrow \mathbb{R}^{n}$, B. Y. Chen \cite{CH} called it a
biharmonic submanifold if $\triangle {\bf H}=(\Delta H^{1},\ldots,
\Delta H^{n})=0$, where $\Delta$ is the Beltrami-Laplace operator of
the induced metric on $M^m$. Note that if we use ${\bf i}:
M\longrightarrow \mathbb{R}^{n}$ to denote the inclusion map of the
submanifold, then the tension field of the inclusion map ${\bf i}$
is given by $\tau ({\bf i})=\Delta {\bf i} = m{\bf H}$ and hence the
submanifold $M^n\subset \r^n$ is biharmonic if and only if
$\triangle {\bf H} =\triangle(\frac{1}{m}\triangle {\bf
i})=\frac{1}{m}\triangle^{2}{\bf i}=\frac{1}{m}\tau^{2}({\bf i})=0$,
i.e., the inclusion map is a biharmonic map. In general, a
submanifold $M$ of $(N,h)$ is called a {\bf biharmonic submanifold}
if the inclusion map ${\bf i}: (M, {\bf i}^*h)\longrightarrow (N,h)$
is biharmonic isometric immersion. It is well-known that an
isometric immersion is minimal if and only if it is harmonic. So a
minimal submanifold is trivially biharmonic and we call a
non-minimal
biharmonic submanifold a {\bf proper biharmonic submanifold}.\\

Here are some known facts about biharmonic submanifolds:
\begin{itemize}
\item[1.]{\em  Biharmonic submanifolds in Euclidean spaces}: Jiang
\cite{Ji2}, Chen-Ishikawa \cite{CI} proved that any biharmonic
submanifold in $\r^{3}$ is minimal; Dimitri$\acute{\rm c}$ \cite{Di}
showed that any biharmonic curves in $\r^n$ is a part of a straight
line, any biharmonic submanifold of finite type in $\r^n$ is
minimal, any pseudo-umbilical submanifolds $M^m\subset \r^n$ with
$m\ne 4$ is minimal, and any biharmonic hypersurface in $\r^n$ with
at most two distinct principal curvatures is minimal; it is proved
in \cite{HV} that any biharmonic hypersurface in $\r^4$ is minimal.
Based on these, B. Y. Chen \cite{CH} proposed the conjecture: any
biharmonic submanifold of Euclidean space is minimal, which is still
open.
\item[2.]{\em Biharmonic submanifolds in hyperbolic space forms}:
Caddeo, Montaldo and Oniciuc \cite{CMO2} showed that any biharmonic
submanifold in hyperbolic $3$-space $H^{3}(-1)$ is minimal, and
pseudo-umbilical biharmonic submanifold $M^m\subset H^n$ with $m\ne
4$ is minimal. It is shown in \cite{BMO} that any biharmonic
hypersurface of $H^n$ with at most two distinct principal curvatures
is minimal. Based on these, Caddeo, Montaldo and Oniciuc \cite{CMO1}
extended Chen's conjecture to be {\bf the generalized Chen's
conjecture}: any biharmonic submanifold  in $(N, h)$ with ${\rm
Riem}^N\leq 0$ is minimal. \item[3.]{\em Biharmonic submanifolds in
spheres}: The first example of proper biharmonic submanifold in
$S^{n+1}$ was found (\cite{Ji3}) to be the generalized Clifford
torus $S^p(\frac{1}{\sqrt{2}})\times S^{q}(\frac{1}{\sqrt{2}})$ with
$p\ne q, p+q=n$. The second type of the proper biharmonic
submanifolds in $S^{n+1}$ was found in \cite{CMO1} to be hypersphere
$S^{n}(\frac{1}{\sqrt{2}})$. The authors in \cite{CMO1} also gave a
complete classification of biharmonic submanifolds in $S^3$. It was
proved in \cite{BMO} that any pseudo-umbilical biharmonic
submanifold $M^m\subset S^{n+1}$ with $m\ne 4$ has constant mean
curvature whilst in \cite{BMO} the same authors showed that a
hypersurface $M^{n}\subset S^{n+1}$ with at most two distinct
principal curvatures (which, for $n>3$, is equivalent to saying that
$M$ is a quasi-umbilical or conformally flat hypersurface in
$S^{n+1}$ \cite{NM}) is biharmonic, then $M$ is an open part of  the
hypersphere $S^{n}(\frac{1}{\sqrt{2}})$, or the generalized Clifford
torus $S^p(\frac{1}{\sqrt{2}})\times S^{q}(\frac{1}{\sqrt{2}})$ with
$p\ne q, p+q=n$. Some example of proper biharmonic real
hypersurfaces in $CP^n$ were found and all proper biharmonic tori
$T^{n+1}=S^{1}(r_1)\times S^{1}(r_2)\times\ldots \times
S^{1}(r_{n+1})$ in $S^{2n+1}$ were determined in \cite{Zh}. All the
known examples of biharmonic submanifolds in spheres lead to
 {\bf the conjecture} \cite{BMO}: any biharmonic
submanifold in sphere has constant mean curvature; and any proper
biharmonic hypersurface in $S^{n+1}$ is an open part of  the
hypersphere $S^{n}(\frac{1}{\sqrt{2}})$, or the generalized Clifford
torus $S^p(\frac{1}{\sqrt{2}})\times S^{q}(\frac{1}{\sqrt{2}})$ with
$p\ne q, p+q=n$.  \item[4.]{\em Biharmonic submanifolds in other
model spaces}: For the study of biharmonic curves in various model
spaces we refer the readers to the survey article \cite{MO}, and for
special biharmonic submanifolds in contact manifolds or Sasakian
space forms see recent works \cite {AEMS}, \cite{In}, \cite{FO1},
\cite{FO2}, \cite{Sa1}, and \cite{Sa2}. Some constructions and
classifications of biharmonic surfaces in three-dimensional
geometries will appear in \cite{OW}.  \item[5.]{\em Biharmonic
submanifolds in other senses}: We would like to point out that some
authors (as in \cite{VMB}) use $\Delta {\bf H}=0$ to define a
``biharmonic submanifold" in a Riemannian manifold, which agree with
our notion of biharmonic submanifold only if the ambient space is
 flat. For conformal biharmonic submanifolds (i.e., conformal
biharmonic immersions) see \cite{Ou}.
\end{itemize}

In this paper, we study biharmonic hypersurfaces in a generic
Riemannian manifold. In Section 2, we derive an invariant equation
for biharmonic hypersurfaces in a Riemannian manifold that involves
the mean curvature function, the norm of the second fundamental
form, the shape operator of the hypersurface, and the Ricci
curvature of the ambient space, and prove that the generalized
Chen's conjecture is true for totally umbilical hypersurfaces in an
Einstein space. Section 3 is devoted to construct a family of
conformally flat metrics and a family of multiply warped product
metrics each of which turns the foliation of an upper-half space of
$\r^m$ by parallel hyperplanes into a foliation with each leave a
proper biharmonic hypersurface. These are accomplished by starting
with hyperplanes in Euclidean space then looking for a special type
of conformally flat or multiply warped product metrics on the
ambient space that reduce the biharmonic hypersurface equation into
ordinary differential equations whose solutions give the metrics
that render the inclusion maps proper biharmonic isometric
immersions. Finally, we study biharmonicity of Hopf cylinders given
by a Riemannian submersion from a complete $3$-manifold in Section
4. Our method shows that there is no proper biharmonic Hopf cylinder
in $S^3$ which recovers Proposition 3.1 in \cite{In}.

\section{The equations of biharmonic hypersurfaces}

Recall that if $\varphi : M\longrightarrow (N, h)$ is the inclusion
map of a submanifold, or more generally, an isometric immersion,
then we have an orthogonal decomposition of the vector bundle
$\varphi^{-1}TN=\tau M\oplus \nu M $ into the tangent and normal
bundles. We use ${\rm d}\varphi$ to identify $TM$ with its image
$\tau M$ in $\varphi^{-1}TN$. Then, for any $X,Y\in \Gamma(TM)$ we
have $\nabla^{\varphi}_{X}({\rm d}\varphi(Y))=\nabla^{N}_{X}Y$,
whereas ${\rm d}\varphi(\nabla^{M}_{X}Y)$ equals the tangential
component of $ \nabla^{N}_{X}Y$. It follows that
\begin{eqnarray}\label{T2F}
\nabla {\rm d}\varphi(X,Y)=\nabla^{\varphi}_{X}({\rm
d}\varphi(Y))-{\rm d}\varphi(\nabla^{M}_{X}Y)=B(X,Y),
\end{eqnarray}
i.e., the second fundamental form $\nabla {\rm d}\varphi(X,Y)$ of
the isometric immersion  $\varphi$  agrees with the second
fundamental form $B(X,Y)$ of the immersed submanifold $\varphi(M)$
in $N$ (see \cite{KN}, Chapter 7, also \cite{BW1}, Example 3.2.3 for
details). From (\ref{T2F}) we see that the tension field
$\tau(\varphi)$ of an isometric immersion and the mean curvature
vector field $\eta$ of the submanifold are related by
\begin{equation}
\tau(\varphi)=m\eta.
\end{equation}

For a hypersurface, i.e, a codimensional one isometric immersion
$\varphi:M^{m}\longrightarrow N^{m+1}$, we can choose a local unit
normal vector field $\xi$ to $\varphi(M) \subset N$. Then,
$\eta=H\xi$ with H being the mean curvature function, and we can
write $B(X,Y)=b(X,Y)\xi$, where $b:TM\times TM\longrightarrow
C^{\infty}(M)$ is the function-valued second fundamental form. The
relationship between the shape operator $A$ of the hypersurface with
respect to the unit normal vector field $\xi$ and the second
fundamental form is given by
\begin{eqnarray}\label{2FF}
B(X,Y)=\langle \nabla^N_XY, \xi\rangle\xi=-\langle Y,
\nabla^N_X\xi\rangle\xi=\langle AX, Y\rangle\xi,\\\label{2FF2}
\langle AX, Y\rangle=\langle B(X,Y), \xi\rangle =\langle b(X,Y)\xi,
\xi\rangle =b(X,Y).
\end{eqnarray}

\begin{theorem}\label{MTH}
Let $\varphi:M^{m}\longrightarrow N^{m+1}$ be an isometric immersion
of codimension-one with mean curvature vector $\eta=H\xi$. Then
$\varphi$ is biharmonic if and only if:
\begin{equation}\label{BHEq}
\begin{cases}
\Delta H-H |A|^{2}+H{\rm
Ric}^N(\xi,\xi)=0,\\
 2A\,({\rm grad}\,H) +\frac{m}{2} {\rm grad}\, H^2
-2\, H \,({\rm Ric}^N\,(\xi))^{\top}=0,
\end{cases}
\end{equation}
where ${\rm Ric}^N : T_qN\longrightarrow T_qN$ denotes the Ricci
operator of the ambient space defined by $\langle {\rm Ric}^N\, (Z),
W\rangle={\rm Ric}^N (Z, W)$ and  $A$ is the shape operator of the
hypersurface with respect to the unit normal vector $\xi$.
\end{theorem}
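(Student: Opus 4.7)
The plan is to substitute $\tau(\varphi) = mH\xi$ (from (3)) into the biharmonic equation \eqref{BTF} and to decompose the resulting section $\tau^{2}(\varphi)$ of $\varphi^{-1}TN$ into tangential and normal components along $\varphi(M)$.

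First I would fix a point $p \in M$ and work in a local orthonormal tangent frame $\{e_i\}_{i=1}^{m}$ that is geodesic at $p$, so $\nabla^{M}_{e_i}e_j$ vanishes at $p$ and the rough Laplacian reduces to $\sum_i \nabla^{\varphi}_{e_i}\nabla^{\varphi}_{e_i}(mH\xi)$. Using the Weingarten relation $\nabla^{N}_{X}\xi = -AX$ (which follows from \eqref{2FF}) and the Gauss formula $\nabla^{N}_{X}Y = \nabla^{M}_{X}Y + \langle AX,Y\rangle\xi$ for tangent $Y$, one computes $\nabla^{\varphi}_{e_i}(H\xi) = (e_iH)\xi - H\,Ae_i$. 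Applying $\nabla^{\varphi}_{e_i}$ a second time and summing gives, at $p$,
\[
\sum_i \nabla^{\varphi}_{e_i}\nabla^{\varphi}_{e_i}(H\xi) = \bigl(\Delta H - H|A|^{2}\bigr)\xi - 2A({\rm grad}\,H) - H\sum_i \nabla^{M}_{e_i}(Ae_i).
\]
The three groups of terms arise, respectively, from the second normal derivative of $H\xi$ together with the normal correction $|Ae_i|^{2}\xi$ produced by Gauss on $\nabla^{N}_{e_i}(Ae_i)$, from the two identical cross-terms $(e_iH)\nabla^{N}_{e_i}\xi$, and from the tangential part of $\nabla^{N}_{e_i}(Ae_i)$.

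The residual sum is handled by the Codazzi equation $(\nabla_{X}A)Y - (\nabla_{Y}A)X = -(R^{N}(X,Y)\xi)^{\top}$ together with the self-adjointness of $A$ and $\nabla A$; a short trace computation at $p$ in the geodesic frame produces the identity
\[
\sum_i (\nabla_{e_i}A)(e_i) = m\,{\rm grad}\,H - ({\rm Ric}^{N}(\xi))^{\top},
\]
and this is what inserts the Ricci operator of $N$ into the tangential component. For the curvature term of \eqref{BTF}, $mH\sum_i R^{N}(e_i,\xi)e_i$, the pair-symmetries of $R^{N}$ in the adapted frame $\{e_1,\dots,e_m,\xi\}$ give the clean identity $\sum_i R^{N}(e_i,\xi)e_i = -{\rm Ric}^{N}(\xi)$.

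Assembling the two pieces and incorporating the factor of $m$ carried by $\tau$, I would then collect the coefficient of $\xi$ to read off $\Delta H - H|A|^{2} + H\,{\rm Ric}^{N}(\xi,\xi) = 0$, while the tangent part, after dividing by the overall factor of $-m$ and using ${\rm grad}\,H^{2} = 2H\,{\rm grad}\,H$, simplifies to $2A({\rm grad}\,H) + \frac{m}{2}{\rm grad}\,H^{2} - 2H({\rm Ric}^{N}(\xi))^{\top} = 0$, which are exactly the two equations of \eqref{BHEq}. The main obstacle is the Codazzi trace step: the bookkeeping with the symmetries of $R^{N}$ is where sign errors are easiest to commit, and it is also what explains why it is the Ricci curvature of $N$ — rather than the full Riemann tensor — that appears in both equations.
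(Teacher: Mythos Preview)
Your proposal is correct and follows essentially the same route as the paper: both arguments expand $\tau^{2}(\varphi)$ in an adapted orthonormal frame, split off the normal piece $(\Delta H - H|A|^{2})\xi$ via the Gauss/Weingarten relations, and then invoke the Codazzi equation to trace $\sum_i(\nabla_{e_i}A)e_i$ into $m\,{\rm grad}\,H - ({\rm Ric}^{N}(\xi))^{\top}$, while the curvature term collapses to $-{\rm Ric}^{N}(\xi)$. The only cosmetic difference is that the paper packages the second-derivative terms as $\Delta^{\varphi}\xi$ before decomposing, whereas you go straight through the shape operator; the computations and the key Codazzi step are identical.
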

\begin{proof}
Choose a local orthonormal frame $\{{e_{i}}\}_{i=1,\ldots,m}$ on $M$
so that $\{{\rm d}\varphi(e_{1}), \ldots, {\rm d}\varphi(e_{m}),
\xi\}$ is an adapted orthonormal frame of the ambient space defined
on the hypersurface. Identifying  ${\rm
d}\varphi(X)=X,\;\;\nabla^{\varphi}_{X} W =\nabla^{N}_{X}W$ and
noting that the tension field of $\varphi$ is $\tau(\varphi)=m H
\xi$ we can compute the bitension field of $\varphi$ as:
\begin{equation}\label{J126}
\begin{array}{lll}
\tau^{2}(\varphi)=\sum\limits_{i=1}^{m}\{\nabla^{\varphi}_{e_{i}}\nabla^{\varphi}_{e_{i}}(mH\xi)
-\nabla^{\varphi}_{\nabla_{e_{i}}e_{i}}(mH\xi)-R^{N}({\rm
d}\varphi(e_{i}),mH\xi) d
\varphi(e_{i})\}\\
=m \sum\limits_{i=1}^{m}\{{e_{i}}e_{i}
(H)\xi+2{e_{i}}(H)\nabla^{N}_{e_{i}}\xi +
H\nabla^{N}_{e_{i}}\nabla^{N}_{e_{i}}\xi
-(\nabla_{e_{i}}e_{i})(H)\xi-H\nabla^{N}_{\nabla_{e_{i}}e_{i}}
\xi\}\\-mH\sum\limits_{i=1}^{m}R^{N}({\rm
d}\varphi(e_{i}), \xi) d \varphi(e_{i})\\
=m(\Delta H)\xi-2mA({\rm grad}\,H)
-mH\Delta^{\varphi}\xi-mH\sum\limits_{i=1}^{m}R^{N}({\rm
d}\varphi(e_{i}), \xi) d \varphi(e_{i}).
\end{array}
\end{equation}
To find the tangential and normal parts of the bitension field we
first compute the tangential and  normal components of the curvature
term to have
\begin{equation}
 \sum\limits_{i,k=1}^{m} \langle R^{N}({\rm d}\varphi(e_{i}),
\xi) d \varphi(e_{i}),e_k \rangle e_k=-[{\rm
Ric}^N(\xi,e_k)]e_k=-(Ric\,(\xi))^{\top},
\end{equation}
and
\begin{equation} \sum\limits_{i=1}^{m} \langle R^{N}({\rm
d}\varphi(e_{i}), \tau(\varphi)) d \varphi(e_{i}),\xi
\rangle=-mH{\rm Ric}^N(\xi,\xi).
\end{equation}

To find the normal part of $\Delta^{\varphi}\xi$ we compute:
\begin{equation}\label{LNor}
\langle\Delta^{\varphi}\xi,\xi
\rangle=\sum\limits_{i=1}^{m}\langle-\nabla^{N}_{e_{i}}\nabla^{N}_{e_{i}}
\xi+\nabla^{N}_{\nabla_{e_{i}}e_{i}} \xi,\xi
\rangle=\sum\limits_{i=1}^{m}\langle \nabla^{N}_{e_{i}}\xi,
\nabla^{N}_{e_{i}}\xi\rangle.
\end{equation}
On the other hand, using (\ref{2FF}) and (\ref{2FF2}) we have
\begin{eqnarray}\notag
|A|^2&=&\sum_{i,j=1}^m\langle
Ae_i,e_j\rangle^2=\sum\limits_{i,j=1}^{m}{\langle\nabla^{N}_{e_{i}}\xi,e_{j}
\rangle}^{2}=\sum\limits_{i=1}^{m}\langle\nabla^{N}_{e_{i}}\xi,\sum\limits_{j=1}^{m}\langle\nabla^{N}_{e_{i}}\xi,e_{j}
\rangle e_{j} \rangle\\\notag &=&
\sum_{i=1}^m\langle\nabla^{N}_{e_{i}}\xi, \nabla^{N}_{e_{i}}\xi
\rangle,
\end{eqnarray}
which, together with (\ref{LNor}),  implies that
\begin{equation}
(\Delta^{\varphi}\xi)^{\bot}=\langle\Delta^{\varphi}\xi,\xi
\rangle\xi=\sum\limits_{i=1}^{m}{\langle\nabla^{N}_{e_{i}}\xi,
\nabla^{N}_{e_{i}}\xi \rangle}\xi= |A|^{2}\xi.
\end{equation}
A straightforward computation gives the tangential part of
$\Delta^{\varphi}\xi$ as
\begin{eqnarray}\label{GD26}
(\Delta^{\varphi}\xi)^{\top} &=&
\sum\limits_{i,k=1}^{m}\langle-\nabla^{N}_{e_{i}}\nabla^{N}_{e_{i}}
\xi+\nabla^{N}_{\nabla_{e_{i}}e_{i}} \xi,e_k \rangle e_k\\\notag &=&
\sum\limits_{i,k=1}^{m}\langle
\nabla^{N}_{e_{i}}Ae_i-A(\nabla_{e_i}e_i),\; e_{k}\rangle e_k=
\sum\limits_{i,k=1}^{m}\Big[ (\nabla_{e_i}b)(e_k,\, e_i) \Big] e_k.
\end{eqnarray}
Substituting Codazzi-Mainardi equation for a hypersurface:
\begin{eqnarray}
&&(\nabla_{e_i}b)(e_k,e_i)-(\nabla_{e_k}b)(e_i,e_i)=(R^N(e_i,e_k)e_i)^{\bot}\\\notag
&=& \langle R^N(e_i,e_k)e_i, \xi\rangle
\end{eqnarray}
 into (\ref{GD26}) and using the normal coordinates at a point we have
\begin{eqnarray}
&&(\Delta^{\varphi}\xi)^{\top}=\sum\limits_{i,k=1}^{m}\Big[
(\nabla_{e_i}b)(e_k\, e_i) \Big] e_k\\\notag &=&
\sum_{k=1}^{m}\Big[\sum_{i=1}^{m}(\nabla_{e_k}b)(e_i,e_i) -{\rm
Ric}(\xi, e_k)\Big] e_k=m\, {\rm grad} \,(H )-[{\rm Ric}(\xi, e_k)]
e_k.
\end{eqnarray}
 Therefore, by collecting all the tangent and normal parts of the bitension field separately, we have
\begin{equation}\label{BOT}
(\tau^{2}(\varphi))^{\bot}=\langle\tau^{2}(\varphi),\xi
\rangle\xi=m\left(\Delta H-H|A|^{2}+H {\rm
Ric}^N(\xi,\xi)\right)\xi,
\end{equation}
and
\begin{eqnarray}\label{TOP}\notag
(\tau^{2}(\varphi))^{\top}&=&\sum\limits_{k=1}^{m}\langle\tau^{2}(\varphi),e_k
\rangle e_k\\ &=& -m\left( 2A\,({\rm grad}\,H) +\frac{m}{2} ({\rm
grad}\, H^2) -2\, H\,({\rm Ric}(\xi))^{\top}\right),
\end{eqnarray}
from which the theorem follows.
\end{proof}
As an immediate consequence of Theorem \ref{MTH} we have
\begin{corollary}
A constant mean curvature hypersurface in a Riemannian manifold is
biharmonic if and only if it is minimal or, ${\rm
Ric}^N(\xi,\xi)=|A|^2$ and $({\rm Ric}^N(\xi))^{\top}=0$. In
particular, we recovered Proposition 2.4 in \cite{Oni}  which states
that a constant mean curvature hypersurface in a Riemannian manifold
$(N^{m+1}, h)$ with nonpositive Ricci curvature is biharmonic if and
only if it is minimal.
\end{corollary}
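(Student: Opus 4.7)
The plan is to apply Theorem \ref{MTH} directly, exploiting the fact that constancy of $H$ kills every gradient term in the system (\ref{BHEq}).

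First I would substitute $H$ equal to a constant into (\ref{BHEq}). Both ${\rm grad}\,H$ and ${\rm grad}\,H^{2}$ vanish identically, so the tangential equation collapses to $H\,({\rm Ric}^{N}(\xi))^{\top}=0$, while the normal equation collapses to $H\,({\rm Ric}^{N}(\xi,\xi)-|A|^{2})=0$. Splitting on whether this constant $H$ is zero then yields the stated dichotomy: either $M$ is minimal, or $H$ is a nonzero constant and one may divide by $H$ to obtain the two pointwise identities ${\rm Ric}^{N}(\xi,\xi)=|A|^{2}$ and $({\rm Ric}^{N}(\xi))^{\top}=0$.

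For the ``in particular'' statement I would argue by sign. Under the hypothesis ${\rm Ric}^{N}\leq 0$ one has ${\rm Ric}^{N}(\xi,\xi)\leq 0$ pointwise, whereas $|A|^{2}\geq 0$ always; the equality ${\rm Ric}^{N}(\xi,\xi)=|A|^{2}$ therefore forces both sides to vanish everywhere. In particular $A\equiv 0$, so $H=\tfrac{1}{m}{\rm tr}\,A=0$, contradicting the assumption that $H$ is a nonzero constant. Hence only the minimal alternative survives, which is exactly Proposition 2.4 of \cite{Oni}.

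There is essentially no obstacle here; the corollary is a direct specialization of Theorem \ref{MTH}. The only small care required is to treat the cases $H\equiv 0$ and $H$ a nonzero constant separately before dividing by $H$, and to remember that in the second step $|A|^{2}=0$ forces the full shape operator to vanish, not merely its trace, so that the conclusion $H=0$ really does follow from the sign argument.
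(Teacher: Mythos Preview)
Your proposal is correct and matches the paper exactly in spirit: the paper offers no separate proof for this corollary, stating only that it is ``an immediate consequence of Theorem \ref{MTH}'', and your argument is precisely the routine specialization one expects---substitute $H$ constant into (\ref{BHEq}), note $\Delta H=0$ and ${\rm grad}\,H=0$, then split on whether $H$ vanishes. The sign argument for the nonpositive Ricci case is likewise the intended one.
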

\begin{corollary}
A hypersurface in an Einstein space $(N^{m+1},h)$ is biharmonic if
and only if its mean curvature function $H$ is a solution of the
following PDEs
\begin{equation}\label{Ein}
\begin{cases}
\Delta H-H\,|A|^{2}+ \frac{rH}{m+1}=0,\\
 2A\,({\rm grad}\,H) +\frac{m}{2}\, {\rm grad}\, H^2
=0,
\end{cases}
\end{equation}
where $r$ is the scalar curvature of the ambient space. In
particular, a hypersurface $\varphi :(M^{m}, g)\longrightarrow
(N^{m+1}(C), h)$ in a space of constant sectional curvature $C$ is
biharmonic if and only if its mean curvature function $H$ is a
solution of the following PDEs which was obtained by different
authors in several steps (see \cite{Ji2}, \cite{CH} and \cite{CMO2})
\begin{equation}\label{BHCon}
\begin{cases}
\Delta H-H\,|A|^{2}+ mCH=0,\\
 2A\,({\rm grad}\,H) +\frac{m}{2}\, {\rm grad}\, H^2=0.
\end{cases}
\end{equation}
\end{corollary}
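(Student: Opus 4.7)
The plan is to obtain this corollary as a direct specialization of Theorem \ref{MTH}, using the two structural features of an Einstein space: the Ricci operator is a scalar multiple of the identity, and hence sends the unit normal to a multiple of itself. Concretely, I would start by recalling that if $(N^{m+1}, h)$ is Einstein then $\mathrm{Ric}^N = \lambda\, h$ with $\lambda$ constant, and taking the trace gives $r = (m+1)\lambda$, so $\lambda = r/(m+1)$. I would write this both as a $(0,2)$-tensor and in the operator form introduced in the theorem, $\mathrm{Ric}^N(Z) = \tfrac{r}{m+1}Z$.

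With that in hand, the two inputs needed for the biharmonic system (\ref{BHEq}) read off immediately: first, $\mathrm{Ric}^N(\xi,\xi) = r/(m+1)$, which substitutes directly into the normal (first) equation; second, $\mathrm{Ric}^N(\xi) = \tfrac{r}{m+1}\xi$ is a multiple of the unit normal, so its tangential component $(\mathrm{Ric}^N(\xi))^\top$ vanishes identically, killing the last term of the tangential (second) equation. Plugging these into (\ref{BHEq}) yields (\ref{Ein}), which is precisely the claimed characterization on Einstein targets.

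For the final part about a space form $N^{m+1}(C)$, I would simply recall that constant sectional curvature $C$ forces $\mathrm{Ric} = mC\, h$ in dimension $m+1$, so $r = (m+1)mC$ and $r/(m+1) = mC$. Substituting this value of $r$ into (\ref{Ein}) recovers (\ref{BHCon}), matching the classical formulation in \cite{Ji2}, \cite{CH}, \cite{CMO2}.

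There is essentially no obstacle: all the analytic work, including the delicate computation of $(\Delta^\varphi \xi)^\top$ via Codazzi–Mainardi and the separation of $\tau^2(\varphi)$ into tangential and normal parts, has already been carried out in the proof of Theorem \ref{MTH}. The only conceptual point worth flagging is the vanishing $(\mathrm{Ric}^N(\xi))^\top = 0$, which is a one-line consequence of the Einstein condition but is the reason the second PDE in (\ref{Ein}) has no Ricci term at all; the corollary is otherwise a transparent substitution.
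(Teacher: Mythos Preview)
Your proposal is correct and follows essentially the same argument as the paper: use the Einstein condition ${\rm Ric}^N = \frac{r}{m+1}h$ to read off ${\rm Ric}^N(\xi,\xi)=\frac{r}{m+1}$ and $({\rm Ric}^N(\xi))^{\top}=0$, substitute into (\ref{BHEq}) to obtain (\ref{Ein}), and then specialize via $r=m(m+1)C$ for the space-form case. There is nothing to add.
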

\begin{proof}
It is well known that if $(N^{m+1},h)$ is an Einstein manifold then
${\rm Ric}^N (Z,W)=\frac{r}{m+1}h(Z,W)$ for any $Z, W \in TN$ and
hence $({\rm Ric}^N\,(\xi))^{\top}=0$ and ${\rm
Ric}^N(\xi,\xi)=\frac{r}{m+1}$. From these and Equation (\ref{BHEq})
we obtain Equation (\ref{Ein}). When $(N^{m+1}(C),h)$ is a space of
constant sectional curvature $C$, then it is an Einstein space with
the scalar curvature $r=m(m+1)C$. Substituting this into (\ref{Ein})
we obtain (\ref{BHCon}).
\end{proof}

\begin{theorem}\label{EinT}
A totally umbilical hypersurface in an Einstein space with
non-positive scalar curvature is biharmonic if and only if it is
minimal.
\end{theorem}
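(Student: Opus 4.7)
The plan is to substitute the totally umbilical hypothesis into the Einstein biharmonic system (\ref{Ein}) of the preceding corollary, reducing it to a scalar equation for the mean curvature $H$, and then to use the sign hypothesis on the scalar curvature $r$ to force $H\equiv 0$.

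First I would exploit total umbilicity. Since $A=H\,{\rm Id}$ on $TM$, one has $|A|^{2}=mH^{2}$ and $A({\rm grad}\,H)=H\,{\rm grad}\,H=\frac{1}{2}\,{\rm grad}\,H^{2}$. Inserting these into the tangential equation of (\ref{Ein}) collapses it to
\[
\frac{m+2}{2}\,{\rm grad}\,H^{2}=0,
\]
so $H^{2}$ is locally constant on $M$. On any connected component where this constant equals zero the hypersurface is minimal, which is trivially biharmonic and hence already on the correct side of the equivalence.

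Next I would treat the case in which $H$ is a nonzero constant on some connected component. Then $\Delta H=0$ and $|A|^{2}=mH^{2}$, so the normal equation of (\ref{Ein}) reduces to
\[
H\Bigl(-mH^{2}+\frac{r}{m+1}\Bigr)=0,
\]
which forces $H^{2}=\dfrac{r}{m(m+1)}$. But $r\le 0$ makes the right-hand side non-positive, contradicting $H^{2}>0$. Thus no connected component can carry a nonzero constant mean curvature, and $H$ vanishes identically. The converse direction is immediate: a minimal isometric immersion is harmonic, and every harmonic map is biharmonic.

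I do not foresee any real obstacle: once Corollary 2.3 is in hand, the argument is essentially a two-line algebraic deduction. The only mild point of care is that the vanishing of ${\rm grad}\,H^{2}$ only yields a locally constant $H^{2}$, which one handles componentwise; the hypothesis $r\le 0$ is precisely what rules out any nonzero constant value and lets the componentwise conclusion propagate to all of $M$.
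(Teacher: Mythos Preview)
Your proposal is correct and follows essentially the same approach as the paper: substitute the totally umbilical relations $A=H\,{\rm Id}$, $|A|^{2}=mH^{2}$ into the Einstein biharmonic system (\ref{Ein}), read off from the tangential equation that $H$ is (locally) constant, and then use the normal equation together with $r\le 0$ to exclude any nonzero value of $H$. The only difference is cosmetic---the paper writes everything in terms of the principal curvature $\lambda$ (with $H=\lambda$) and is slightly less explicit about the componentwise argument, which you handle carefully.
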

\begin{proof}
Take an orthonormal frame $\{e_1, \ldots, e_m,\xi\}$ of
$(N^{m+1},h)$ adapted to the hypersurface $M$ such that $A
e_i=\lambda_ie_i$, where $A$ is the Weingarten map of the
hypersurface and $\lambda_i$ is the principal curvature in the
direction $e_i$. Since $M$ is supposed to be totally umbilical,
i.e., all principal normal curvatures  at any point $p\in M$ are
equal to the same number $\lambda(p)$. It follows that
\begin{eqnarray}\notag
H=\frac{1}{m}\sum_{i=1}^m\langle A e_i,e_i\rangle=\lambda,\\\notag A
({\rm grad} H)=A(\sum_{i=1}^m(e_i \lambda)e_i)=\frac{1}{2}{\rm
grad}\, \lambda^2,\\\notag |A|^2=m \lambda^2.
\end{eqnarray}
The biharmonic hypersurface equation (\ref{Ein}) becomes
\begin{equation}\notag
\begin{cases}
\Delta \lambda-m\lambda^{3}+ \frac{r\lambda}{m+1}=0,\\
 (2+m)\,{\rm grad}\, \lambda^2
=0.
\end{cases}
\end{equation}
Solving the equation we have either $\lambda=0$ and hence $H=0$, or
$\lambda=\pm \sqrt{\frac{r}{m(m+1)}\;}$ is a constant and this
happens only if the scalar curvature is nonnegative, from which we
obtain the theorem.
\end{proof}
\begin{remark}
Our Theorem \ref{EinT} generalizes the results of \cite{BMO},
\cite{CMO2} and \cite{Di} about the totally umbilical biharmonic
hypersurfaces in a space form. It also implies that the generalized
B. Y. Chen's conjecture is true for totally umbilical hypersurfaces
in an Einstein space with non-positive scalar curvature. Note that
non-positive scalar curvature is a much weaker condition than
non-positive sectional curvature.
\end{remark}

\begin{corollary}\label{Rflat}
Any totally umbilical biharmonic hypersurface in a Ricci flat
manifold is minimal.
\end{corollary}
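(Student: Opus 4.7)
The plan is to observe that Corollary \ref{Rflat} is essentially a direct specialization of the just-proved Theorem \ref{EinT}. A Ricci flat manifold satisfies $\mathrm{Ric}^N \equiv 0$, so in particular $\mathrm{Ric}^N(Z,W) = 0 \cdot h(Z,W)$, which is the Einstein condition with proportionality constant zero; equivalently, the scalar curvature $r$ vanishes. Since $r = 0$ is (trivially) non-positive, the hypotheses of Theorem \ref{EinT} are met, and the conclusion that the hypersurface is minimal follows at once.

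If one prefers a self-contained derivation without explicitly invoking Theorem \ref{EinT}, the steps are the same as in its proof, but they simplify considerably. First, choose an orthonormal frame $\{e_1,\dots,e_m,\xi\}$ adapted to the totally umbilical hypersurface so that $Ae_i = \lambda e_i$ with $\lambda = H$, giving $A(\mathrm{grad}\,H) = \tfrac{1}{2}\mathrm{grad}\,\lambda^2$ and $|A|^2 = m\lambda^2$. Second, substitute $\mathrm{Ric}^N(\xi,\xi) = 0$ and $(\mathrm{Ric}^N(\xi))^\top = 0$ (both free from the Ricci flat hypothesis) into the biharmonic hypersurface system (\ref{BHEq}) of Theorem \ref{MTH}. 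The system collapses to
\begin{equation*}
\Delta\lambda - m\lambda^3 = 0, \qquad (m+2)\,\mathrm{grad}\,\lambda^2 = 0.
\end{equation*}
The tangential equation forces $\lambda^2$, and hence $\lambda$, to be locally constant, so $\Delta\lambda = 0$ and the normal equation reduces to $m\lambda^3 = 0$. Thus $\lambda \equiv 0$, i.e.\ $H \equiv 0$, which is the claim.

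There is no real obstacle here; the content of the corollary is the observation that the Einstein condition is preserved in the degenerate limit $r = 0$, and that this limit forces the constant $\pm\sqrt{r/(m(m+1))}$ branch of solutions in the proof of Theorem \ref{EinT} to coincide with the minimal branch $\lambda = 0$.
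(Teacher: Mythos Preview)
Your proposal is correct and follows the paper's own approach: the paper's proof is simply the observation that a Ricci flat manifold is an Einstein space with zero (hence non-positive) scalar curvature, so Theorem~\ref{EinT} applies directly. Your additional self-contained derivation is just the $r=0$ specialization of the proof of Theorem~\ref{EinT} and adds nothing new, but it is also correct.
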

\begin{proof}
These follows from Theorem \ref{EinT} and the fact that a Ricci flat
manifold is an Einstein space with zero scalar curvature.
\end{proof}

\section{ Proper biharmonic foliations of codimension one}

In general, proper biharmonic maps as local solutions of a system of
$4$-th order PDEs are extremely difficult to unearth. Even in the
case of biharmonic submanifolds (viewed as biharmonic maps with
geometric constraints) few examples have been found. In this
section, we construct families of metrics that turns some foliations
of hypersurfaces into proper biharmonic foliations thus providing
infinitely many proper biharmonic hypersurfaces.

\begin{theorem}\label{Cflat}
For any constant $C$, let $N=\{(x_1,\ldots, x_m,z)\in
\r^{m+1}|z>-C\}$ denote the upper half space. Then, the conformally
flat space $(N,h=f^{-2}(z)(\sum_{i=1}^m{\rm d}{x_i}^{2}+{\rm
d}z^{2}))$ is foliated by proper biharmonic hyperplanes
$z=k\;\;(k\in \r, k>-C)$ if and only if $f(z)=\frac{D}{z+E}$, where
$E\geq C$ and $D\in \r\setminus\{0\}$.
\end{theorem}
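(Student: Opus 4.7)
My plan is to apply Theorem~\ref{MTH} directly to the leaves $M_k = \{z=k\}$ of the foliation, regarded as hypersurfaces of $(N,h)$ with the ambient conformal metric. The first step is to pin down the geometric data of these leaves. Since the metric is translation-invariant in the $x_i$ directions and depends on $z$ only, a natural adapted orthonormal frame is $\xi = f(z)\,\partial_z$ (the unit normal) together with $e_i = f(z)\,\partial_{x_i}$. Computing the Christoffel symbols of $h = f^{-2}(z)(\sum dx_i^2 + dz^2)$ one finds that the only nonzero components are $\Gamma^{m+1}_{jk} = (f'/f)\,\delta_{jk}$, $\Gamma^j_{k,m+1} = -(f'/f)\,\delta_{jk}$ and $\Gamma^{m+1}_{m+1,m+1} = -f'/f$, from which a short computation gives $\nabla^N_{e_i}\xi = -f'(z)\,e_i$. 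Hence the shape operator satisfies $A e_i = f'(z)\,e_i$, so the leaf $M_k$ is totally umbilical with $H = f'(z)$ and $|A|^2 = m(f'(z))^2$. In particular $H$ is a constant function on each leaf.

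Since $H$ is constant along $M_k$, both $\mathrm{grad}\,H$ and $\Delta H$ (computed intrinsically on $M_k$) vanish. Moreover, because $h$ is invariant under the reflections $x_i \mapsto -x_i$, the Ricci tensor is diagonal in the coordinates $(x_1,\dots,x_m,z)$, which forces $({\rm Ric}^N(\xi))^{\top} = 0$. Consequently the tangential equation of \eqref{BHEq} is automatically satisfied, and the whole biharmonic system collapses to
\begin{equation*}
H\,\bigl({\rm Ric}^N(\xi,\xi) - |A|^2\bigr) = 0.
\end{equation*}
The leaves are proper biharmonic (i.e.\ non-minimal) precisely when $H = f'(z) \neq 0$ and ${\rm Ric}^N(\xi,\xi) = m(f'(z))^2$ for every $z > -C$.

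The next step is to evaluate ${\rm Ric}^N(\xi,\xi)$ using the standard conformal-change formula. Writing $h = e^{2u} g_E$ with $u = -\log f$, the Ricci tensor of $h$ on the flat background $g_E$ is
\begin{equation*}
{\rm Ric}^h = -(m-1)\bigl(\mathrm{Hess}^{g_E} u - du\otimes du\bigr) - \bigl(\Delta^{g_E} u + (m-1)|du|_{g_E}^2\bigr)\,g_E.
\end{equation*}
A direct substitution of $u = -\log f$, where all derivatives are $z$-derivatives, yields ${\rm Ric}^h_{zz} = m f''/f - m (f'/f)^2$, and therefore ${\rm Ric}^N(\xi,\xi) = f^2\,{\rm Ric}^h_{zz} = m f f'' - m(f')^2$. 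The biharmonicity condition thus becomes the ODE
\begin{equation*}
f f'' = 2(f')^2,
\end{equation*}
which is the heart of the computation.

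The final step is to integrate this ODE and check the domain condition. Away from critical points, $f''/f' = 2 f'/f$ gives $f' = c_1 f^2$, and a second integration produces $f(z) = D/(z+E)$ with $D = -1/c_1 \neq 0$ and $E$ an arbitrary integration constant. Conversely, every such function satisfies $ff'' = 2(f')^2$ and has $f'(z) = -D/(z+E)^2 \neq 0$ for $D \neq 0$, so each leaf is indeed proper biharmonic. For $f$ to define a smooth Riemannian metric on all of $N = \{z > -C\}$ one needs $z + E \neq 0$ for every $z > -C$, equivalently $-E \leq -C$, i.e.\ $E \geq C$; this matches the stated constraint. I do not foresee a serious obstacle: the only delicate point is correctly tracking signs and factors of $m$ in the conformal-change formula for the Ricci tensor, and everything else is a straightforward ODE.
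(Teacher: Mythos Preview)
Your proposal is correct and follows essentially the same route as the paper's proof: set up the adapted frame $e_A=f\partial_A$, observe that each leaf is totally umbilical with $H=f'$ and $|A|^2=m(f')^2$, reduce \eqref{BHEq} to the single condition ${\rm Ric}^N(\xi,\xi)=|A|^2$, compute ${\rm Ric}^N(\xi,\xi)=m f f''-m(f')^2$, and solve $ff''=2(f')^2$. The only cosmetic differences are that the paper obtains the Ricci term by a direct curvature computation from the connection table rather than via the conformal-change formula, and integrates the ODE after the substitution $(f'/f)'=(f'/f)^2$ instead of $f'=c_1f^2$; both lead to $f(z)=D/(z+E)$.
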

\begin{proof}
Consider the isometric immersion $\varphi : (\mathbb{R}^m,
g)\longrightarrow (\mathbb{R}^{m+1},h=f^{-2}(z)(\sum_{i=1}^m{\rm
d}{x_i}^{2}+{\rm d}{z}^{2}))$ with $\varphi(x_1,\ldots,
x_m)=(x_1,\ldots, x_m,k)$ and $k$ being a constant, where the
induced metric $g$ with respect to the natural frame
$\partial_i=\frac{\partial}{\partial x_i},i=1,2, \ldots, m,
 \partial_{m+1}=\frac{\partial}{\partial z}$ has components
\begin{equation}\notag
g_{ij}=g(\partial_i,\partial_j)=h(d\varphi(\partial
_i),d\varphi(\partial _j))\circ\varphi=
\begin{cases}
f^{-2}(k),\;\;i=j,\\\notag 0\;\;\;i\ne j.
\end{cases}
\end{equation}
One can check that $ e_A=f(z)\partial_A\;\; (A=1,2,\ldots, m, m+1)$
constitute a local orthonormal frame  on $\mathbb{R}^{m+1}$ adapted
to the hypersurface $z=k$ with $\xi=e_{m+1}$ being the unit normal
vector field. A straightforward computation using Koszul's formula
gives the coefficients of the Levi-Civita connection of the ambient
space as

\begin{equation}\label{NCFE1}
(\nabla_{e_A}e_{B})=\left(\begin{array}{ccccc} f'e_{m+1}
&0&\ldots&0&-f'e_1
\\ 0 & f'e_{m+1}&\ldots&0&-f'e_2\\
\ldots&\ldots&\ldots&\ldots&\ldots
\\ 0 & 0&\ldots&f'e_{m+1}&-f'e_m\\
0&0&\ldots&0&0\\
\end{array}\right)_{(m+1)\times (m+1)}.
\end{equation}

Noting that $\xi=e_{m+1}$ is the unit normal vector field we can
easily compute the components of the second fundamental form as
\begin{eqnarray}\notag
h(e_i,e_j)=\langle\nabla_{e_i}e_j,e_{m+1}\rangle=\begin{cases}
f',\;\;i=j=1, 2,\ldots, m;\\
0,\;\; {\rm for\; all\; other\; cases}.
\end{cases},
\end{eqnarray}
from which we conclude that each of the hyperplane $z=k$ is a
totally umbilical hypersurface in the conformally flat space.\\

We compute the mean curvature of the hypersurface to have
\begin{equation}\notag
H=\frac{1}{m}\sum_{i=1}^mh(e_i,e_i)=f',
\end{equation}
 and the norm of the second fundamental form is given by
\begin{equation}\notag
|A|^2=\sum_{i=1}^m|h(e_i,e_i)|^2=mf'^2.
\end{equation}
Since $H$ depends only on $z$ we have ${\rm grad}_gH=
\sum_{i=1}^me_i(H)e_i=0$ and hence $\Delta_gH={\rm div} ({\rm
grad}_gH)=0$. Therefore, by Theorem \ref{MTH}, the biharmonic
equation of the isometric immersion reduces to the following system
\begin{equation}\label{NCONF}
\begin{cases}
-\,|A|^{2}+ {\rm
Ric}^N(\xi,\xi)=0,\\
  \,\sum_{i=1}^m\big({\rm Ric}^N\,(\xi,e_i)\big)e_i=0.
\end{cases}
\end{equation}

We can compute the Ricci curvature of the ambient space to have
\begin{eqnarray}\notag
&& {\rm Ric}\, (e_i,\xi)={\rm Ric}\,
(e_i,e_{m+1})=\sum_{j=1}^m\langle
R(e_{m+1},e_j)e_j,e_i\rangle=0,\;\;\forall \;i=1,2,\ldots,
m.\\\notag
 && {\rm Ric}\, (\xi,\xi)={\rm Ric}\,
(e_{m+1},e_{m+1})=\sum_{j=1}^m\langle
R(e_{m+1},e_j)e_j,e_{m+1}\rangle\\\notag&&=mff''-mf'^2.
\end{eqnarray}
Substitute these into Equation (\ref{NCONF}) we conclude that all
isometric immersions $\varphi : \mathbb{R}^m\longrightarrow
(\mathbb{R}^{m+1},h=f^{-2}(z)(\sum_{i=1}^m{\rm d}{x_i}^{2}+{\rm
d}{z}^{2}))$ with $\varphi(x_1,\ldots, x_m)=(x_1,\ldots, x_m,k)$ are
biharmonic if and only if
\begin{equation}\notag
ff''-2f'^2=0.
\end{equation}
This equation can be written as
\begin{equation}\notag
(\frac{f'}{f})'-(\frac{f'}{f})^2=0.
\end{equation}

 Solving this ordinary differential
equation we obtain the solutions $f(z)=\frac{D}{z+C}$ where $C, D$
are constants. Since the mean curvature of the hypersurface
$H=f'(k)$ is never zero we conclude that each of the hyperplanes
$z=k \;\;(k\ne -C)$ is a proper biharmonic hypersurface in the
conformally flat space $(N,h=(\frac{z+C}{D})^2(\sum_{i=1}^m{\rm
d}{x_i}^{2}+{\rm d}z^{2}))$. This completes the proof of the
theorem.
\end{proof}

\begin{theorem}\label{Pdouble}
The isometric immersion $\varphi : \mathbb{R}^2\longrightarrow
(\mathbb{R}^{3},h=e^{2p(z)}{\rm d}x^{2}+e^{2q(z)}{\rm d}y^{2}+{\rm
d}z^{2})$ with $\varphi(x,y)=(x,y,c)$ is  biharmonic if and only if
\begin{equation}\label{ODE}
p''+2p'^2+q''+2q'^2=0.
\end{equation}
In particular, for any positive constants $A, B, C, D$, the upper
half space $(\mathbb{R}^{3}_+=\{(x,y,z)|z>0\}$ with the metric
$h=(Az+B){\rm d}x^{2}+(Cz+D){\rm d}y^{2}+{\rm d}z^{2}$ is foliated
by proper biharmonic planes $z={\rm constant}$.
\end{theorem}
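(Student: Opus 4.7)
My plan is to mirror the proof of Theorem \ref{Cflat}, replacing the conformally flat metric with the multiply warped product metric. First, I would introduce the orthonormal frame
\[
e_1 = e^{-p(z)}\partial_x,\qquad e_2 = e^{-q(z)}\partial_y,\qquad e_3 = \partial_z
\]
adapted to the hypersurface $z=c$, so that $\xi = e_3$ is the unit normal. Computing the only nonvanishing Lie brackets $[e_1,e_3] = p'\,e_1$ and $[e_2,e_3] = q'\,e_2$ and applying Koszul's formula produces the connection table $\nabla_{e_1}e_1 = -p'\,e_3$, $\nabla_{e_2}e_2 = -q'\,e_3$, $\nabla_{e_1}e_3 = p'\,e_1$, $\nabla_{e_2}e_3 = q'\,e_2$, with all remaining $\nabla_{e_A}e_B$ zero. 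This is the warped-product analogue of (\ref{NCFE1}).

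From this table I would read off the second fundamental form $b(e_1,e_1) = -p'$, $b(e_2,e_2) = -q'$, $b(e_1,e_2) = 0$, so the shape operator $A$ is diagonal with eigenvalues $-p'$ and $-q'$, the mean curvature is $H = -(p'+q')/2$, and $|A|^{2} = p'^{\,2}+q'^{\,2}$. Since $H$ depends only on $z$, it is constant along each leaf $z=c$; in particular ${\rm grad}_g H = 0$ and $\Delta_g H = 0$, which kills the $\Delta H$ and ${\rm grad}\,H^2$ terms in (\ref{BHEq}).

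Next I would compute the Ricci pieces needed in Theorem \ref{MTH}. A direct curvature computation from the connection table gives $\langle R(e_3,e_1)e_1,e_3\rangle = -(p''+p'^{\,2})$ and $\langle R(e_3,e_2)e_2,e_3\rangle = -(q''+q'^{\,2})$, whereas the off-diagonal components ${\rm Ric}^N(\xi,e_i)$ vanish because $\nabla_{e_j}e_i = 0$ for $i\neq j$ in $\{1,2\}$ and $[e_j,e_3]$ is parallel to $e_j$. Hence ${\rm Ric}^N(\xi,\xi) = -(p''+p'^{\,2}+q''+q'^{\,2})$ and $({\rm Ric}^N(\xi))^{\top} = 0$. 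Substituting into (\ref{BHEq}), the tangential equation is automatic, while the normal equation collapses to $-H\,(p''+2p'^{\,2}+q''+2q'^{\,2}) = 0$. Setting aside the trivially biharmonic (minimal) branch $H=0$, this is exactly the ODE (\ref{ODE}).

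For the second assertion, I would verify (\ref{ODE}) in the particular case $e^{2p(z)} = Az+B$, $e^{2q(z)} = Cz+D$: from $p' = A/(2(Az+B))$ one gets $p'' = -2p'^{\,2}$ immediately, and symmetrically $q'' = -2q'^{\,2}$, so (\ref{ODE}) holds identically in $z$. For generic positive constants $A,B,C,D$ the function $H = -(p'+q')/2$ is nowhere zero on the upper half space, so every plane $z=c$ is a proper biharmonic hypersurface. No genuine obstacle appears in this proof --- the whole argument is careful bookkeeping built on the connection table --- but I would guard against sign errors in the curvature step by checking that $-(p''+p'^{\,2})$ reproduces the Gauss curvature of the warped slice ${\rm d}z^{2} + e^{2p(z)}{\rm d}x^{2}$, a quantity computable independently.
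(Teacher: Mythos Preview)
Your proposal is correct and follows essentially the same route as the paper: the same adapted orthonormal frame, the same connection table obtained via Koszul's formula, the same second fundamental form and Ricci computations, and the same reduction of (\ref{BHEq}) to the single scalar condition $p''+2p'^{2}+q''+2q'^{2}=0$. Your treatment of the second assertion also matches the paper's, and your observation that $p''=-2p'^{2}$ and $q''=-2q'^{2}$ hold separately is exactly how the paper constructs the special solutions; note that for \emph{all} positive $A,B,C,D$ (not merely generic ones) the mean curvature $H=-\tfrac{1}{2}\bigl(\tfrac{A}{2(Az+B)}+\tfrac{C}{2(Cz+D)}\bigr)$ is strictly negative on $z>0$, so the word ``generic'' can be dropped.
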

\begin{proof}
Consider the isometric immersion $\varphi :
\mathbb{R}^2\longrightarrow (\mathbb{R}^{3}_+,h=e^{2p(z)}{\rm
d}x^{2}+e^{2q(z)}{\rm d}y^{2}+{\rm d}z^{2})$ with
$\varphi(x,y)=(x,y,c)$ and $c>0$ being a constant. Using the
notations $\partial_1=\frac{\partial}{\partial x},
\partial_2=\frac{\partial}{\partial y}, \partial_3=\frac{\partial}{\partial z}$ we can easily check that the
induced metric is given by
\begin{equation}\notag
\begin{cases}
g_{11}=g(\partial_1,\partial_1)=h(d\varphi(\partial
_1),d\varphi(\partial _1))\circ\varphi=e^{2p(c)},\\\notag
g_{12}=g(\partial_1,\partial_2)=h(d\varphi(\partial_1),d\varphi(\partial
_2))\circ\varphi=0,\\\notag g_{22}=g(\partial_2,\partial
_2)=h(d\varphi(\partial _2),d\varphi(\partial
_2))\circ\varphi=e^{2q(c)}.
\end{cases}
\end{equation}
One can also check that $
e_1=e^{-p(z)}\partial_1,\;\;e_2=e^{-q(z)}\partial_2,\;\;e_3=\partial_3$
constitute an  orthonormal frame  on $\mathbb{R}^{3}_+$ adapted to
the surface $z=c$ with $\xi=e_3$ being the unit normal vector field.
A further computation gives the following Lie brackets
\begin{equation}\label{Li3}
[e_1,e_2]=0,\;\;[e_1,e_3]=p'e_1,\;\;[e_2,e_3]= q'e_2,
\end{equation}
and the coefficients of the Levi-Civita connection

\begin{equation}\label{CFE1}
\begin{array}{lll}
\nabla_{e_{1}}e_{1}=-p'e_3,\hskip0.7cm\nabla_{e_{1}}e_{2}=0, \hskip1cm\nabla_{e_{1}}e_{3}=p'e_1\\
\nabla_{e_{2}}e_{1}=0,\hskip1.5cm \nabla_{e_{2}}e_{2}=-q'e_3,\hskip0.8cm\nabla_{e_{2}}e_{3}=q'e_2\\
\nabla_{e_{3}}e_{1}=0,\hskip1.5cm\nabla_{e_{3}}e_{2}=0,\hskip1.2cm\nabla_{e_{3}}e_{3}=0.\\
\end{array}
\end{equation}

Noting that $\xi=e_3$ is the unit normal vector field we can compute
the components of the second fundamental form as
\begin{eqnarray}\notag
h(e_1,e_1)=\langle\nabla_{e_1}e_1,e_3\rangle=-p',
\\\notag h(e_1,e_2)= \langle\nabla_{e_1}e_2,e_3\rangle=0,\\\notag
h(e_2,e_2)= \langle\nabla_{e_2}e_2,e_3\rangle=-q'.
\end{eqnarray}

From these we obtain the mean curvature of the isometric immersion
\begin{equation}\label{Mean}
H=\frac{1}{2}(h(e_1,e_1)+h(e_2,e_2))=-(p'+q')/2,
\end{equation}
 and
the norm of the second fundamental form
\begin{equation}\notag
|A|^2=\sum_{i=1}^2|h(e_i,e_i)|^2=p'^2+q'^2.
\end{equation}
Since $H$ depends only on $z$ we have ${\rm grad}_gH=
e_1(H)e_1+e_2(H)e_2=0$ and hence $\Delta_gH={\rm div} ({\rm
grad}_gH)=0$. Therefore, by Theorem \ref{MTH}, the biharmonic
equation of the isometric immersion reduces to Equation
(\ref{NCONF}) with $m=2$. To compute the Ricci curvature of the
ambient space we can use (\ref{Li3}) and (\ref{CFE1}) to have
\begin{eqnarray}\notag
&& {\rm Ric}\, (e_1,\xi)={\rm Ric}\, (e_1,e_3)=\langle
R(e_3,e_2)e_2,e_1\rangle=0,\\\notag &&{\rm Ric}\, (e_2,\xi)={\rm
Ric}\,(e_2,e_3)=\langle R(e_3,e_1)e_1,e_3\rangle=0,\\\notag && {\rm
Ric}\, (\xi,\xi)={\rm Ric}\, (e_3,e_3)=\langle
R(e_3,e_1)e_1,e_3\rangle+\langle R(e_3,e_2)e_2,e_3\rangle\\\notag
&&=-p''-p'^2-q''-q'^2.
\end{eqnarray}
Substitute these into Equation (\ref{NCONF}) with $m=2$ we conclude
that the isometric immersion $\varphi : \mathbb{R}^2\longrightarrow
(\mathbb{R}^{3},h=e^{2p(z)}{\rm d}x^{2}+e^{2q(z)}{\rm d}y^{2}+{\rm
d}z^{2})$ with $\varphi(x,y)=(x,y,c)$ is  biharmonic if and only if
Equation (\ref{ODE}) holds, which gives the first statement of the
Theorem. The second statement of the theorem is obtained by looking
for the solutions of (\ref{ODE}) satisfying $p''+2p'^2=0$ and
$q''+2q'^2=0$. In fact, we have special solutions
$p(z)=\frac{1}{2}\ln (Az+B)$ and $q(z)=\frac{1}{2}\ln (Cz+D)$ with
positive constants $A, B, C, D$. By (\ref{Mean}) and the choice of
positive constants $A, B, C, D$ we see that the mean curvature of
the surface $z=c$ is $ H=-\frac{2ACz+AD+BC}{2(Az+B)(Cz+D)}\ne 0$ and
hence each such surface is a non-minimal biharmonic surface. This
completes the proof of the theorem.
\end{proof}
\begin{remark}
One can check that Theorem \ref{Pdouble} has a generalization to a
higher dimensional space $\mathbb{R}^{m}_+$ for $m>3$.
\end{remark}
\begin{example}\label{Ex1}
Let $\lambda(t)=\sqrt {A t+B}$ with positive constants $A,B$. Then,
the warped product space $N=( S^2\times \r^+, h=
\lambda^2(t)g^{S^2}+{\rm d}t^2)$ is foliated by the spheres
$(S^2\times \{t\}, \lambda^2(t)g^{S^2})$ each of which is a totally
umbilical proper biharmonic surface.
\end{example}

In fact, to see what is claimed in the Example \ref{Ex1}, we
parametrize the unit sphere $S^{2}$ by spherical polar coordinates:
\begin{equation}\notag
\mathbb{R}\times\mathbb{R}\ni(\rho,\theta)\longrightarrow (\cos
\rho, \sin \rho\, e^{i \theta})\in \mathbb{R}^{3}.
\end{equation}
Then, the standard metric can be written as $g^{S^2}={\rm d}\rho^2+
\sin^2\rho\; {\rm d}\theta^2$ and hence the warped product metric on
$N$ takes the form $h=\lambda^2(t){\rm d}\rho^2+
\lambda^2(t)\sin^2\rho\; {\rm d}\theta^2+{\rm d}t^2$. Consider the
isometric immersion $\varphi : S^2\longrightarrow
(\mathbb{R}^+\times S^2,{\rm d}t^2+\lambda^2(t)g^{S^2})$ with
$\varphi(\rho,\theta)=(\rho,\theta, c)$ and $c>0$ being a constant.
Using the notations $
\partial_1=\frac{\partial}{\partial \rho}, \partial_2=\frac{\partial}{\partial \theta},\;
\partial_3=\frac{\partial}{\partial t}$ we can easily check that the
induced metric is given by
\begin{equation}
\begin{cases}
g_{11}=g(\partial_1,\partial_1)=h(d\varphi(\partial
_1),d\varphi(\partial _1))\circ\varphi=\lambda^2(c),\\\notag
g_{12}=g(\partial_1,\partial_2)=h(d\varphi(\partial_1),d\varphi(\partial
_2))\circ\varphi=0,\\\notag g_{22}=g(\partial_2,\partial
_2)=h(d\varphi(\partial _2),d\varphi(\partial
_2))\circ\varphi=\lambda^2(c)\sin^2\rho.
\end{cases}
\end{equation}
Using the orthonormal frame $
e_1=\lambda^{-1}(t)\partial_1,\;\;e_2=(\lambda(t)\sin
\rho)^{-1}\,\partial_2,\;\;e_3=\partial_3$ we have the Lie brackets
\begin{equation}\notag
[e_1,e_2]=-\frac{\cot \rho}{\lambda}
e_2,\;\;[e_1,e_3]=fe_1,\;\;[e_2,e_3]=fe_2,\;\;,
\end{equation}
 where and in the sequel we use the notation $f=(\ln \lambda)'=\frac{\lambda'}{\lambda}$.
 Clearly, $e_1, e_2, \xi=e_3=\partial_3$
 constitute a local orthonormal frame of $N$ adapted to the surface with $\xi=e_3=\partial_3$ being the unit
normal vector field of the surface. We can use the Kozsul formula to
compute the components of the second fundamental form as
\begin{eqnarray}\notag
h(e_1,e_1)&=&
\langle\nabla_{e_1}e_1,\xi\rangle=\langle\nabla_{e_1}e_1,e_3\rangle\\\notag
&=&\frac{1}{2}\left(-\langle e_1,[e_1,e_3]\rangle-\langle
e_1,[e_1,e_3]\rangle+\langle e_3,[e_1,e_1]\rangle\right)=-f,\\\notag
h(e_1,e_2)&=&
\langle\nabla_{e_1}e_2,\xi\rangle=\langle\nabla_{e_1}e_2,e_3\rangle=0,\\\notag
h(e_2,e_2)&=&
\langle\nabla_{e_2}e_2,\xi\rangle=\langle\nabla_{e_2}e_2,e_3\rangle=-f,
\end{eqnarray}
from which we conclude that each of such spheres is totally umbilical surface in $N$.\\

Notice that the mean curvature of the isometric immersion is
$H=\frac{1}{2}(h(e_1,e_1)+h(e_2,e_2))=-f$, and the norm of the
second fundamental form $|A|^2=\sum_{i=1}^2|h(e_i,e_i)|^2=2f^2$,
which depend only on $t$. It follows that ${\rm grad}_gH=0$ and
$\Delta_gH=0$. Therefore, by Theorem \ref{MTH}, the proper
biharmonic equation of the isometric immersion reduces to Equation
(\ref{NCONF}) with $m=2$.\\

On the other hand, using the Ricci curvature formula (see e.g.,
\cite{Be}) of the warped product $M=B\times_{\lambda}F$ we have
\begin{eqnarray}\notag
&& {\rm Ric}\, (e_1,\xi)={\rm Ric}\, (e_1,e_3)=0,\;\;{\rm Ric}\,
(e_2,\xi)={\rm Ric}\,(e_2,e_3)=0,\\\notag && {\rm Ric}\,
(\xi,\xi)={\rm Ric}\, (e_3,e_3)={\rm
Ric}^{\mathbb{R}}(e_3,e_3)-\frac{2}{\lambda}{\rm
Hess}_{\lambda}(e_3,e_3)\\\notag &&
=-\frac{2}{\lambda}\left(e_3(e_3\lambda)-{\rm
d}\lambda(\nabla_{e_3}e_3)\right)=-\frac{2\lambda''}{\lambda}.
\end{eqnarray}
Substitute these into Equation (\ref{NCONF}) with $m=2$ we conclude
that the isometric immersion $\varphi : S^2\longrightarrow
(S^2\times \mathbb{R}^+,\lambda^2(t)g^{S^2}+{\rm d}t^2)$ with
$\varphi(\rho,\theta)=(\rho,\theta, c)$ is  biharmonic if and only
if

\begin{equation}\notag
-2\left(\frac{\lambda'}{\lambda}\right)^2-\frac{2\lambda''}{\lambda}=0.
\end{equation}
Solving this final equation we have $\lambda(t)=\sqrt{At+B\,}$ and
from which we obtain the proposition.
\begin{remark}
The author would like to thank the referee for informing him that
the biharmonicity of the inclusion maps in Example \ref{Ex1} can be
obtained as a particular case of Corollary 3.4 in \cite{BMO0} which
was proved by a different method.
\end{remark}

\section{Biharmonic cylinders of a Riemannian submersion}
Let $\pi: (M^3, g)\longrightarrow (N^2, h)$ be a Riemannian
submersion with totally geodesic fibers from a complete manifold.
Let $\alpha: I\longrightarrow (N^2, h)$ be an immersed regular curve
parametrized by arclength. Then $\Sigma =\bigcup_{t\in
I}\pi^{-1}(\alpha(t))$ is a surface in $M$ which can be viewed as a
disjoint union of all horizontal lifts of the curve $\alpha$. Let
$\{{\bar X}=\alpha', {\bar \xi}\}$ be a Frenet frame along $\alpha$
and ${\bar \kappa}$ be the geodesic curvature of the curve. Then,
the Frenet formula for $\alpha$ is give by
\begin{equation}\notag
\begin{cases}
{\tilde\nabla}_{\bar X} {\bar X}={\bar\kappa} {\bar \xi},\\
{\tilde \nabla}_{\bar X} {\bar \xi}=-{\bar\kappa} {\bar X},
\end{cases}
\end{equation}
where  ${\tilde\nabla}$ denote the Levi-Civita connection of $(N,
h)$. Let $\beta: I\longrightarrow (M^3, g)$ be a horizontal lift of
$\alpha$. Let $X$ and $\xi$ be the horizontal lifts of ${\bar X}$
and ${\bar \xi}$ respectively. Let $V$ be the unit vector field
tangent to the fibers of the submersion $\pi$. Then $\{X, \xi, V\}$
form an orthonormal frame of $M$ adapted to the surface with $\xi$
being the unit normal vector of the surface. Notice that the
restriction of this frame to the curve $\beta$ is the Frenet frame
along $\beta$. Therefore, we have the Frenet formula along $\beta$
given by
\begin{equation}\label{FR}
\begin{cases}
\nabla_X X=\kappa \xi,\\
\nabla_X \xi=-\kappa X+\tau V,\\
\nabla_X V=-\tau \xi,\\
\end{cases}
\end{equation}
where $\nabla$ denotes the Levi-Civita connection of $(M,g)$. Since
a Riemannian submersion preserves the inner product of horizontal
vector fields we can check that $ \kappa={\bar\kappa}\circ \pi$ and
$\tau=\langle \nabla_{X}\xi ,V\rangle=\langle A_{X}\xi, V\rangle$
(where, $A$ is the A-tensor of the Riemannian submersion, c.f.
\cite{On}) is the torsion of the horizontal lift which vanishes if
the Riemannian submersion has integrable horizontal distribution. In
what follows we are going to use the orthonormal frame $\{X, \xi,
V\}$ to compute the mean curvature, second fundamental form, and
other terms
that appear in the biharmonic equation of the surface $\Sigma$.\\
Using (\ref{FR}) we have
\begin{eqnarray}\notag
&& A(X)=-\langle \nabla_X \xi, X\rangle X-\langle \nabla_X \xi,
V\rangle V=\kappa X -\tau V,\\\notag && A(V)=-\langle \nabla_V \xi,
X\rangle X-\langle \nabla_V \xi, V\rangle V=-\tau X ;\\\notag &&
b(X,X)=\langle A(X), X\rangle=\kappa,\;\;b(X,V)=\langle A(X),
V\rangle=-\tau,\\\notag && b(V,X)=\langle A(V),
X\rangle=-\tau,\;\;b(V,V)=\langle A(V), V\rangle=0;\\\notag &&
H=\frac{1}{2}(b(X,X)+b(V,V))=\frac{\kappa}{2},\\\notag && A({\rm
grad}\,H)=A(X(\frac{\kappa}{2})X+V(\frac{\kappa}{2})V)=X(\frac{\kappa}{2})A(X)=\frac{\kappa'}{2}(\kappa
X -\tau V);\\\notag && \Delta H=XX(H)-(\nabla_XX)
H+VV(H)-(\nabla_VV) H=\frac{\kappa''}{2};\\\notag &&
|A|^2=(b(X,X))^2+(b(X,V))^2+( b(V,X))^2+(b(V,V))^2=\kappa^2+2\tau^2.
\end{eqnarray}
Substituting these into the biharmonic hypersurface Equation
(\ref{BHEq}) we conclude that the surface $\Sigma$ is biharmonic in
$(M^3, g)$ if and only if
\begin{equation}\notag
\begin{cases}
\frac{\kappa''}{2}-\frac{\kappa}{2}(\kappa^2+2\tau^2)+\frac{\kappa}{2}{\rm
Ric}^M(\xi,\xi)=0,\\ \kappa'(\kappa X -\tau
V)+\frac{\kappa\kappa'}{2}X-\kappa\,{\rm Ric}^M(\xi,X)X-\kappa{\rm
Ric}^M(\xi,V)V=0,
\end{cases}
\end{equation}
which are equivalent to
\begin{equation}\label{Last}
\begin{cases}
\kappa''-\kappa(\kappa^2+2\tau^2)+\kappa{\rm Ric}^M(\xi,\xi)=0,\\
3\kappa'\kappa -2\kappa\,{\rm Ric}^M(\xi,X)=0,\\ \kappa'\tau
+\kappa{\rm Ric}^M(\xi,V)=0.
\end{cases}
\end{equation}

Applying Equation (\ref{Last}) to Hopf fiberation $\pi:
S^3\longrightarrow S^2$ we have the following corollary which
recovers Proposition 3.1 in \cite{In}.
\begin{corollary}
There is no proper biharmonic Hopf cylinder in $S^3$.
\end{corollary}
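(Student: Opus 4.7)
The plan is to specialize the general system (\ref{Last}) to the Hopf fibration $\pi:S^3\to S^2(1/2)$. Since $S^3$ has constant sectional curvature $1$, it is Einstein with ${\rm Ric}^{S^3}(Y,Z)=2\langle Y,Z\rangle$. Evaluating on the adapted orthonormal frame $\{X,\xi,V\}$ gives
\[
{\rm Ric}^{S^3}(\xi,\xi)=2,\qquad {\rm Ric}^{S^3}(\xi,X)=0,\qquad {\rm Ric}^{S^3}(\xi,V)=0,
\]
which collapses the three equations of (\ref{Last}) drastically.

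With these substitutions, the second equation becomes $3\kappa'\kappa=0$, i.e.\ $(\kappa^2)'=0$, so $\kappa$ is constant along the horizontal lift $\beta$ (by smoothness, $|\kappa|$ constant on a connected interval forces $\kappa$ constant). If $\kappa\equiv 0$ then the Hopf cylinder $\Sigma$ is minimal, hence not proper biharmonic. If instead $\kappa$ is a nonzero constant, the third equation is automatically satisfied and the first equation reduces to the algebraic condition
\[
\kappa^{2}+2\tau^{2}=2.
\]
It then remains to determine the torsion $\tau=\langle A_{X}\xi,V\rangle$ of the horizontal lift.

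Here I would use the fact that for the Hopf fibration the O'Neill integrability tensor has constant magnitude. Applying O'Neill's formula for two orthonormal horizontal vectors,
\[
K^{S^2(1/2)}(\pi_{*}X,\pi_{*}\xi)=K^{S^3}(X,\xi)+3|A_{X}\xi|^{2},
\]
and using $K^{S^3}=1$ together with $K^{S^2(1/2)}=4$, one obtains $|A_{X}\xi|^{2}=1$. Since $A_{X}\xi$ is vertical and $V$ is a unit vertical vector, this yields $\tau^{2}=1$. Substituting into $\kappa^{2}+2\tau^{2}=2$ forces $\kappa=0$, contradicting $\kappa\neq 0$. Therefore only minimal Hopf cylinders satisfy the biharmonic condition, and there is no proper biharmonic Hopf cylinder in $S^{3}$.

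The step I expect to require the most care is pinning down the value $\tau^{2}=1$: while it is intuitively a "constant of the fibration", it is really a pointwise assertion about the O'Neill $A$-tensor, and the O'Neill curvature identity above is the cleanest way to derive it without recourse to explicit coordinates on $S^{3}\subset\mathbb{C}^{2}$. Once that constancy is in hand, the remaining argument is purely algebraic and falls out of (\ref{Last}) as described.
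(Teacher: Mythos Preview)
Your argument is correct and follows exactly the route the paper indicates, namely specializing system (\ref{Last}) to the Hopf fibration; the paper itself supplies no further details beyond that one-line remark. Your use of O'Neill's curvature identity to obtain $\tau^{2}=1$ is the natural way to fill in the only nontrivial missing datum, and once that is known the algebra $\kappa^{2}+2\tau^{2}=2$ forces $\kappa=0$ just as you say.
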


Lastly, applying Equation (\ref{Last}) to submersions $\pi:S^2\times
\r\longrightarrow S^2$ and  $\pi:H^2\times \r\longrightarrow H^2$ we
can have
\begin{corollary}
(1) The Hopf cylinder $\Sigma =\bigcup_{t\in I}\pi^{-1}(\alpha(t))$
is a proper biharmonic surface in $S^2\times \r$ if and only if the
directrix $\alpha: I\longrightarrow (S^2, h)$ is (a part of) a
circle in $S^2$ with radius $\sqrt{2}/2$; (2) The Hopf cylinder
$\Sigma =\bigcup_{t\in I}\pi^{-1}(\alpha(t))$ is biharmonic in
$H^2\times \r$ if and only if it is minimal.
\end{corollary}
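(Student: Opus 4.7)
The plan is to apply Equation (\ref{Last}), which was derived above for an arbitrary Riemannian submersion with totally geodesic fibers, directly to each of the two projections $\pi:S^2\times\mathbb{R}\to S^2$ and $\pi:H^2\times\mathbb{R}\to H^2$.

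First, I would observe that in both cases the submersion is the product projection onto the first factor, so the fibers $\{p\}\times\mathbb{R}$ are totally geodesic and the horizontal distribution (the $TS^2$-factor, resp.\ $TH^2$-factor) is integrable. Consequently the A-tensor of the submersion vanishes, and the horizontal lift of any curve has torsion $\tau=\langle A_X\xi,V\rangle=0$.

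Next I would compute the three Ricci terms appearing in Equation (\ref{Last}) using the product structure. Writing $\epsilon=+1$ for the $S^2$-case and $\epsilon=-1$ for the $H^2$-case, the 2-dimensional factor has constant sectional curvature $\epsilon$, hence Ricci tensor $\epsilon$ times its metric, while the Ricci tensor of the $\mathbb{R}$-factor is zero and the product Ricci has no mixed terms. Since $X$ and $\xi$ are orthonormal horizontal vectors and $V$ spans the vertical direction, this yields
\begin{equation*}
{\rm Ric}^M(\xi,\xi)=\epsilon,\qquad {\rm Ric}^M(\xi,X)=\epsilon\langle\xi,X\rangle=0,\qquad {\rm Ric}^M(\xi,V)=0.
\end{equation*}

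Substituting $\tau=0$ and these Ricci values into (\ref{Last}), the third equation is trivially satisfied, the second reduces to $3\kappa\kappa'=0$, and the first becomes $\kappa''-\kappa^3+\epsilon\kappa=0$. From $\kappa\kappa'=0$, either $\kappa\equiv 0$ (so $H\equiv 0$ and the cylinder is minimal), or $\kappa$ is a nonzero constant, in which case the first equation forces $\kappa^2=\epsilon$. For $\epsilon=-1$ this has no real solution, so in $H^2\times\mathbb{R}$ only $\kappa=0$ is allowed, establishing part~(2). For $\epsilon=+1$ we obtain $\kappa=\pm 1$, meaning the directrix $\alpha$ has constant geodesic curvature $1$ in $S^2$; such a curve is an arc of a small circle of geodesic radius $\rho$ satisfying $\cot\rho=1$, i.e.\ $\rho=\pi/4$, whose Euclidean radius in $S^2\subset\mathbb{R}^3$ is $\sin(\pi/4)=\sqrt{2}/2$, proving part~(1).

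The only genuinely non-routine step is verifying that the A-tensor vanishes for these two product submersions (so $\tau=0$); once that and the product-Ricci formula are in hand, the conclusion is essentially algebraic, and the contrast between $\epsilon=+1$ and $\epsilon=-1$ accounts for the dichotomy between the two cases.
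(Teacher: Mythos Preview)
Your proposal is correct and follows exactly the approach the paper indicates: apply Equation~(\ref{Last}) to the product submersions, use $\tau=0$ (integrable horizontal distribution) together with the product Ricci formula, and reduce to the algebraic condition $\kappa^2=\epsilon$. The paper itself gives no explicit proof beyond ``applying Equation~(\ref{Last}),'' so you have simply filled in the omitted details in the intended way.
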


\end{document}